\documentclass[12pt, reqno]{amsart}
\usepackage{amsmath}
\usepackage{amssymb}
\usepackage{epsfig}
\usepackage{graphicx}
\usepackage[top=30truemm,bottom=30truemm,left=30truemm,right=30truemm]{geometry}
\usepackage{color}
\usepackage{comment}
\definecolor{shadecolor}{gray}{0.875}
\usepackage{amscd}
\usepackage{stmaryrd}
\usepackage{blindtext}
\usepackage{hyperref}

\numberwithin{equation}{section}

\input xy
\xyoption{all}

\calclayout
\allowdisplaybreaks[3]

\theoremstyle{plain}
\newtheorem{prop}{Proposition}[section]

\newtheorem{theo}[prop]{Theorem}
\newtheorem{coro}[prop]{Corollary}

\newtheorem{lemm}[prop]{Lemma}

\theoremstyle{definition}

\newtheorem{conj}[prop]{Conjecture}

\newtheorem{rema}[prop]{Remark}

\def\rH{{\mathrm H}}

\def\Nef{\mathrm{Nef}}

\def\Spec{\mathrm{Spec}}

\makeatother
\makeatletter

\author{Sho Tanimoto}
\address{Graduate School of Mathematics, Nagoya University, Furocho Chikusa-ku, Nagoya, 464-8602, Japan}
\email{sho.tanimoto@math.nagoya-u.ac.jp}

\title[Homological sieve and Manin's conjecture]{Homological sieve and Manin's conjecture}

\begin{document}
\date{\today}

\begin{abstract}
This is a report of the author's talk at RIMS workshop Algebraic Number Theory and Related Topics 2025 which was held at RIMS Kyoto University during December 15th--19th 2025.
In this survey paper, we explain the homological sieve method, which is proposed by Das, Lehmann, Tosteson, and the author in \cite{DLTT25}, and its applications to Manin's conjecture.
\end{abstract}

\maketitle

\section{Introduction}

One of the central topics in diophantine geometry is the distribution of rational points on algebraic varieties defined over global fields. Here a main question is the asymptotic formula for the counting function of rational points of bounded height, and one major conjecture is Manin's conjecture which predicts the asymptotic formula for the counting function of rational points of bounded height on a smooth Fano variety defined over number fields. This conjecture was originally initiated by Yuri Manin and subsequently developed in \cite{FMT89}, \cite{BM}, \cite{Peyre}, \cite{BT}, \cite{Peyre03}, \cite{Peyre16}, \cite{LST18}, and \cite{LS24}.

One can also consider Manin's conjecture over global function fields. To this end, Batyrev developed a heuristic argument in \cite{Bat88}, and Ellenberg and Venkatesh suggested that one should use homological stability of the moduli spaces of curves to realize Batyrev's heuristics (see, e.g., \cite{EVW16}). Since then people have been seeking a topological proof of Manin's conjecture over global function fields, however such a proof was only available for easy examples such as projective spaces.

In 2024, Das and Tosteson uploaded \cite{DT24} on arXiv, and in this paper, the authors established a version of Cohen--Jones--Segal conjecture for quintic del Pezzo surfaces which might be considered as a topological version of Manin's conjecture. After this preprint, Das, Lehmann, Tosteson, and the author started to work together, and we established a version of Manin's conjecture for rational curves on quartic del Pezzo surfaces in \cite{DLTT25}. In these papers, the authors developed a version of homological sieve method which encodes the inclusion-exclusion principle in its core. To our knowledge, this was the first example of a topological proof of Manin's conjecture over global function fields. This homological sieve method has been further explored in \cite{Tan25} and \cite{TT25}.
In this survey paper, we will introduce this homological sieve method and its applications to Manin's conjecture and Cohen--Jones--Segal conjecture.

\

\noindent
{\bf Acknowledgements:}
The author would like to thank Ronno Das, Brian Lehmann, and Phil Tosteson for collaborations helping to shape his perspective on homological methods in Manin's conjecture.
The author would like to thank the organizers of RIMS workshop Algebraic Number Theory and Related Topics for the opportunity to give a talk there. This work was supported by the Research Institute for Mathematical Sciences, an International Joint Usage/Research Center located in Kyoto University.
The author thanks Haruki Ito and Natsume Kitagawa for comments on an early draft of this paper.

Sho Tanimoto was partially supported by JST FOREST program Grant number JPMJFR212Z and by JSPS KAKENHI Grant-in-Aid (B) 23H01067.

\section{Manin's conjecture}

Let $k$ be a number field and $X$ be a smooth Fano variety defined over $k$, i.e., $X$ is projective and the anticanonical divisor $-K_X$ is ample. 
We fix an adelic metrization for the anticanonical bundle $\omega_X^{-1} = \mathcal O(-K_X)$. This induces the height function, which is a real-valued function on the set of rational points, i.e., 
\[
\mathsf H_{-\mathcal K_X} : X(k) \to \mathbb R_{\geq 0}.
\]
This measures the geometric and arithmetic complexity of rational points, and it is the fundamental object in arithmetic geometry.
One remarkable feature of this function is that it satisfies the {\it Northcott property}, i.e., the set of rational points of bounded height is a finite set. Thus we define, for any subset $Q \subset X(k)$, its counting function:
\[
\mathsf N(Q, -\mathcal K_X, T) = \# \{ x \in Q \, |\, \mathsf H_{-\mathcal K_X}(x) \leq T\}.
\]
Manin's conjecture, which predicts the asymptotic formula for this counting function, was originally formulated mainly by Batyrev--Manin--Peyre--Tschinkel, and subsequently refined in a series of work \cite{FMT89, BM, Peyre, BT, Peyre03, Peyre16, LST18, LS24}:

\begin{conj}[Manin's conjecture]
\label{conj:Manin}
Suppose that $X(k)$ is not thin.
Then there exists a thin set $\mathsf Z \subset X(k)$ such that
\[
\mathsf N(X(k) \setminus \mathsf Z, -\mathcal K_X, T) \sim \alpha(\mathrm{Nef}_1(X)) \beta(X) \tau_{-\mathcal K_X}(X) T(\log T)^{\rho(X)-1},
\]
as $T \to \infty$ where $\rho(X)$ is the Picard rank of $X$, $\alpha(\mathrm{Nef}_1(X))$ is the alpha constant of the nef cone $\mathrm{Nef}_1(X)$ of curves, $\beta(X)$ is the size of $\mathrm{Br}(X)/\mathrm{Br}(k)$, and $\tau_{-\mathcal K_X}(X)$ is the Tamagawa number, introduced by Peyre for the anticanonical divisor (\cite{Peyre}) and Batyrev--Tschinkel for arbitrary big divisors (\cite{BT}).
\end{conj}

\begin{rema}
A thin set is any subset of a finite union $\cup_i f_i(Y_i(k))$ where $f_i : Y_i \to X$ is a generically finite morphism to the image from a variety $Y_i$, but not birational to $X$. The thin set $\mathsf Z$ is called the {\it exceptional set}, and it is important to remove the contribution from this set because it is possible that rational points are accumulating along subvarieties. Originally it was expected that $\mathsf Z$ should be non-Zariski dense, however, the first counterexample to this expectation was found in \cite{BT-cubic}. Peyre was the first to suggest that the exceptional set should be a thin set (\cite{Peyre03}). A conjectural characterization of $\mathsf Z$ was obtained in \cite{LST18} by Lehmann, Sengupta, and the author, and we even proved that the proposed set is indeed a thin set using higher dimensional algebraic geometry such as the minimal model program.
For more details of Conjecture~\ref{conj:Manin}, see, e.g., \cite{Tani26} and references therein.
\end{rema}

Conjecture~\ref{conj:Manin} is largely open even in dimension $2$, i.e., when $X$ is a {\it del Pezzo surface}. Let us explain the status of this conjecture for surfaces:
\begin{itemize}
\item del Pezzo surfaces of degree $\geq 6$ are toric surfaces. Conjecture~\ref{conj:Manin} for projective toric varieties was proved by Batyrev--Tschinkel in \cite{BT98, CLT01};
\item split quintic del Pezzo surfaces over $\mathbb Q$ in \cite{delaBre02} and over arbitrary number fields in \cite{BD25};
\item some non-split quintic del Pezzo surfaces over $\mathbb Q$ in \cite{HBL25}; and
\item An example of a quartic del Pezzo surface over $\mathbb Q$ in \cite{dBB11}.
\end{itemize}

In particular, there is no single example of a smooth cubic surface for which we know a proof of Conjecture~\ref{conj:Manin}. In this survey paper, we would like to explore Conjecture~\ref{conj:Manin} for del Pezzo surfaces over global function fields, e.g., $\mathbb F_q(t)$.

\section{The space of rational curves on del Pezzo surfaces}
Let $k$ be a field and $K = k(t)$ be the function field of $\mathbb P^1$.
Let $X$ be a projective variety defined over $k$.
By valuative criterion, a $K$-rational point $\Spec \, K \to X$ corresponds to a section $\sigma : \mathbb P^1 \to X \times \mathbb P^1$ of the trivial family, which in turn corresponds to a rational curve $s : \mathbb P^1 \to X$.
This observation naturally leads us to consider the space of rational curves: let $X$ be a projective variety defined over $k$, and $\alpha$ be a numerical class of $1$-cycles on $X$. Then the space of rational curves is the morphism scheme:
\[
\mathrm{Mor}(\mathbb P^1, X, \alpha),
\]
which parametrizes rational curves $s : \mathbb P^1 \to X$ such that $s_*[\mathbb P^1] = \alpha$.
This is a quasi-projective scheme over $k$, and it is a fine moduli scheme of such rational curves. 

Assume that $k = \mathbb F_q$ is a finite field and $S$ is a smooth del Pezzo surface. The counting function of rational curves on $S$ is defined as follows:
\[
\mathsf N(\mathbb P^1, S, -K_S, d) = \sum_{\alpha \in \mathrm{Nef}_1(S)_{\mathbb Z}, -K_S.\alpha \leq d} \#\mathrm{Mor}(\mathbb P^1, S, \alpha)(k),
\]
where $\mathrm{Nef}_1(S)$ is the nef cone of curves on $S$ and $\mathrm{Nef}_1(S)_{\mathbb Z}$ is the set of lattice points in $\mathrm{Nef}_1(S)$. This is the number of $k$-nef rational curves of anticanonical degree $\leq d$.

Manin's conjecture predicts the asymptotic formula:
\[
\mathsf N(\mathbb P^1, S, -K_S, d) \sim (1-q^{-1})\alpha(\mathrm{Nef}_1(S))\beta(S)\tau_{-K_S}(S) q^dd^{\rho(S)-1},
\]
as $d \to \infty$. How one can prove such a statement? One natural approach to this, proposed by Batyrev and Ellenberg--Venkatesh, is to use the Grothendieck--Lefschetz trace formula for $\#\mathrm{Mor}(\mathbb P^1, S, \alpha)(k)$, i.e., 
\[
\# \mathrm{Mor}(\mathbb P^1, S, \alpha)(\mathbb F_q) = \sum_i (-1)^i \mathrm{Tr}(\mathrm{Frob} : \mathrm{H}^i_{\text{\'et}, c} (\mathrm{Mor}(\mathbb P^1, S, \alpha)_{\overline{k}}, \mathbb Q_\ell)).
\]
To this end, we need to establish certain stability of \'etale cohomology
\[
\mathrm{H}^i_{\text{\'et}, c} (\mathrm{Mor}(\mathbb P^1, S, \alpha)_{\overline{k}}, \mathbb Q_\ell).
\]
This motivates topological counting arguments using the homological stability of the moduli space, pioneered in \cite{EVW16}.

\subsection*{The main theorem of \cite{DLTT25}}
The first (?) kind of topological proofs for Manin's conjecture over global function fields was developed by Das, Lehmann, Tosteson, and the author in \cite{DLTT25}, and in this paper, we proposed a homological sieve method, which is a combination of birational geometry (the birational geometry of the space of rational curves), arithmetic geometry (simplicial schemes and the Grothendieck--Lefschetz trace formula),  algebraic topology (the inclusion-exclusion principle and Vassiliev type bar complex), and elementary analytic number theory.
Let us explain the main theorem of \cite{DLTT25}.

Let $k = \mathbb F_q$ as before and $S$ be a split smooth quartic del Pezzo surface defined over $k$. Here split means that we have $\rho(S) = \rho(S_{\overline{k}})$.
Let $\ell$ be a non-negative, rational, homogeneous, continuous, and piecewise linear functional on $\mathrm{Nef}_1(S)$.
Let $\epsilon > 0$ be a small positive rational number. We consider the shrunken cone:
\[
\mathrm{Nef}_1(S)_\epsilon = \{ \alpha \in \mathrm{Nef}_1(S) \, |\, \ell(\alpha) \geq -\epsilon K_S.\alpha \}.
\]
With this cone, we consider the following modified counting function:
\[
\mathsf N(\mathbb P^1, S, -K_S, \epsilon, d) = \sum_{\alpha \in \mathrm{Nef}_1(S)_{\epsilon, \mathbb Z}, -K_S.\alpha \leq d} \#\mathrm{Mor}(\mathbb P^1, S, \alpha)(k).
\]
Here is the main theorem of \cite{DLTT25}:

\begin{theo}[{\cite[Theorem 1.1]{DLTT25}}]
\label{theo:main}
Let $C = 2^{32}$ and assume that $q^\epsilon > C$.
There exists a non-negative, rational, homogeneous, continuous, and piecewise linear functional $\ell$ on $\mathrm{Nef}_1(S)$ such that
$\ell$ is positive on a dense open cone $U \subset \mathrm{Nef}_1(S)$ and we have
\[
\mathsf N(\mathbb P^1, S, -K_S, \epsilon, d) \sim (1-q^{-1})\alpha(\mathrm{Nef}_1(S)_\epsilon)\tau_{-K_S}(S) q^dd^{\rho(S)-1},
\]
as $d \to \infty$. Here the Tamagawa number is given by the Euler product:
\[
\tau_{-K_S}(S) = q^2(1-q^{-1})^{-6} \prod_{c \in |\mathbb P^1|} (1-q^{-|c|})^{6} \frac{\#S(\mathbb F_{q^{|c|}})}{q^{2|c|}},
\]
where $|\mathbb P^1|$ is the set of closed points on $\mathbb P^1$ and $|c|$ is $[k(c): k]$.
\end{theo}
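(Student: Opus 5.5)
The plan is to count points of $\mathrm{Mor}(\mathbb P^1, S, \alpha)$ for each nef class $\alpha$ in the shrunken cone via the Grothendieck--Lefschetz trace formula, after computing its compactly supported cohomology in a stable range by a homological sieve. Concretely, I would realize $\mathrm{Mor}(\mathbb P^1,S,\alpha)$ through the universal torsor (Cox ring) description of the split quartic del Pezzo surface $S$. Then $S$ is cut out of a quotient of an open subset of $\mathbb A^{16}$ (the $16$ lines) by the Néron--Severi torus, and a morphism $\mathbb P^1\to S$ of class $\alpha$ corresponds to a tuple of binary forms $(f_E)$, one for each line $E$, of degrees $E.\alpha$. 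These forms must satisfy the torsor equations and a coprimality condition: the forms $f_E$ for lines $E$ meeting in a common Cox-irrelevant subset must have no common zero. Modulo the torus $\mathbb G_m^{\rho(S)}$, this identifies $\mathrm{Mor}(\mathbb P^1,S,\alpha)$ with an open subset $U_\alpha$ of the affine-type space $V_\alpha$ of tuples satisfying the equations. The complement $V_\alpha\setminus U_\alpha$ is the "bad locus" where coprimality fails.

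The sieve step encodes inclusion--exclusion over the bad locus homologically. For each finite "divisor data" (a point $c\in\mathbb P^1$ together with an irrelevant stratum where the forms vanish), the locus of tuples degenerate at $c$ is again a space of the same type, of class $\alpha$ minus a correction, times a configuration of the points $c$. I would build a Vassiliev-type bar (semi-simplicial) resolution of $V_\alpha\setminus U_\alpha$ by configuration spaces of such degeneracy points. This gives a spectral sequence converging to $\mathrm{H}_c^*(U_\alpha)$ whose $E_1$ page involves $\mathrm{H}_c^*$ of the spaces $V_{\alpha'}$ for smaller classes $\alpha'$, tensored with the cohomology of ordered or unordered configuration spaces of $\mathbb P^1$ twisted by local systems. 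These pieces must be shown to be pure, or at least controlled. Taking Frobenius traces termwise reproduces the Euler product $\prod_c (1-q^{-|c|})^{6}\#S(\mathbb F_{q^{|c|}})/q^{2|c|}$, truncated at a degree proportional to $\ell(\alpha)$, as the main term. Everything beyond the stable range contributes an error.

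The key quantitative inputs, in order, are the following.
\begin{enumerate}
\item A dimension estimate: I need $V_{\alpha'}$ to have the expected dimension $-K_S.\alpha'+2$ for all relevant $\alpha'$. This comes from birational geometry of rational curves on del Pezzo surfaces, which gives irreducibility and expected dimension for nef classes away from the boundary.
\item A Betti number bound: the total compactly supported Betti numbers of the $E_1$ terms in degree $i$ must grow at most like $C^{i}$ with $C=2^{32}$, coming from counting cells of the bar complex and the bounds on configuration-space cohomology.
\item A choice of $\ell$: I would take $\ell(\alpha)$ to be essentially $\min_E E.\alpha$ over lines, suitably adjusted piecewise-linearly. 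This guarantees that the truncation range of the sieve is $\ge \epsilon(-K_S.\alpha)$ on the shrunken cone $\mathrm{Nef}_1(S)_\epsilon$.
\end{enumerate}
Deligne weight bounds then give an error of the form $\sum_{i>\epsilon d} C^i q^{d-i/2}$. This is $o(q^d)$ precisely because $q^\epsilon>C$, since the error is bounded by roughly $q^d (C q^{-\epsilon})^{\epsilon d}$-type terms.

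Finally, I would sum over $\alpha\in\mathrm{Nef}_1(S)_{\epsilon,\mathbb Z}$ with $-K_S.\alpha\le d$. The main terms are each $(1-q^{-1})^{-?}\tau\, q^{-K_S.\alpha}$ up to the torus factor $(1-q^{-1})^{\rho}$ conventions. By an elementary lattice-point count they sum to $(1-q^{-1})\alpha(\mathrm{Nef}_1(S)_\epsilon)\tau_{-K_S}(S)q^dd^{\rho-1}$, while the errors sum to $o(q^dd^{\rho-1})$. I expect the main obstacle to be the Betti number bound in the second input. It requires uniform control of the cohomology of all strata in the bar resolution, including non-generic degeneracy loci where the dimension estimate could fail. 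These strata must be shown to be cohomologically small enough not to spoil the exponential bound $C^i$. My first attempt would be to bound each stratum by a product of symmetric powers of $\mathbb P^1$ and affine spaces, and to use crude Betti estimates for these.
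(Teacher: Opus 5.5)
Your overall architecture (trace formula, Vassiliev-type bar complex, Euler product, Betti bound, $q^\epsilon>C$, summation over the cone) matches the paper. There is, however, a genuine gap: the geometric input that makes the sieve computable is missing, and the universal-torsor model cannot supply it. The Cox ring of a quartic del Pezzo surface is not polynomial. The tuples $(f_E)$ must satisfy quadratic relations of the form $\sum_j\lambda_j f_{E_j}f_{E'_j}=0$, coming from the reducible fibres of the conic bundles. So your $V_\alpha$ is a singular, non-linear variety, and its compactly supported cohomology is essentially as unknown as that of $\mathrm{Mor}(\mathbb P^1,S,\alpha)$ itself. Your recursion also breaks. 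Suppose $f_E$ and $f_{E'}$ vanish at $c$ for disjoint lines $E,E'$. Dividing them by a local equation of $c$ lowers the degree of only some monomials in these relations, so the degenerate locus is not ``a space of the same type of class $\alpha$ minus a correction''. That recursion works only when the Cox ring is polynomial, i.e.\ in the toric case.

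The paper's key idea is a linearization. For each $\alpha$ one chooses a contraction $\rho_\alpha:S\to\mathbb P^1\times\mathbb P^1$ with $(2F-\sum_iE_i).\alpha\ge0$ and $(2F'-\sum_iE_i).\alpha\ge0$ (Lemma~\ref{lemm:birationalmor}), and records $w=(s^*E_i)_i\in U_{\mathbf k(\alpha)}$. Once $w$ is fixed, the remaining conditions on $(s,t)$ are linear. Hence the $\mathbb G_m^2$-torsor $\widetilde M_\alpha$ is open in a vector bundle $E\to U_{\mathbf k}$ (Proposition~\ref{prop:structuretheorem}). Moreover, $E\setminus\widetilde M_\alpha$ is the image of a family of \emph{linear} subspaces $Z_{w<x}$ indexed by the poscheme $(U_{\mathbf k}<\mathrm{Hilb}(B)^Q)$. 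The quantities $2a+1-\sum_ik_i$ and $2a'+1-\sum_ik_i$ control when these subspaces have expected dimension, which is the actual hypothesis of the comparison theorem. The resulting main term is $q^{2a+2a'+4}\sum\mu_k(w,x)q^{-\gamma(x)}$. The Euler product only emerges after summing over $\mathbf k$ in the virtual height zeta function $\mathsf Z(\mathbf t)$, using multiplicativity of $\mu_k$, the limit $t_i\to1$, and Abel summation.

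The error term also needs a different input. The truncated bar complex computes $\rH^i_c$ only for $i>2\dim E-I-2$. Bounding its $E_1$ terms therefore says nothing about the unstable degrees, which is exactly where the error lives. What the paper uses is a bound on the total Betti number of $\mathrm{Mor}(\mathbb P^1,S,\alpha)$ itself: the Sawin--Shusterman theorem $\sum_i\dim\rH^i_c\le C^{e+1}$, with $C$ depending only on $h^0(-K_S)$ and $K_S^2$. That is where $C=2^{32}$ comes from. It sidesteps the non-generic strata you flag as the main obstacle rather than controlling them. Deligne's bound then gives an unstable contribution of size roughly $C^{h+1}q^{h+1-I/2}$ with $h=-K_S.\alpha$. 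This is $o(q^h)$ because $\ell$ can be normalized so that $I/2\gtrsim\ell(\alpha)\ge\epsilon h$ on $\Nef_1(S)_\epsilon$, together with $q^\epsilon>C$. Your degree-wise form $\sum_{i>\epsilon d}C^iq^{d-i/2}$ would instead need $q>C^2$ and would not involve $\epsilon$ at all, which signals that the bound you posit is not the one available.

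Finally, $\ell=\min_EE.\alpha$ overclaims. The stable range the method actually gives is $I=\lfloor\frac18\min\{2a+1-\sum_ik_i,\,2a'+1-\sum_ik_i\}-\frac12\rfloor$. Since $(2F-\sum_iE_i).(-K_S)=0$ for every such contraction, $I<0$ along $\alpha=-nK_S$, even though $\min_EE.\alpha=n$ there. This is why $\ell$ is only positive on a dense open cone $U$ and not on the whole interior of $\Nef_1(S)$.
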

In particular, this gives a lower bound of the correct magnitude for $\mathsf N(\mathbb P^1, S, -K_S, d).$
In this survey paper, we explain the proof of the above theorem using homological sieve method.

\section{Birational geometry of the moduli space of rational curves}
\label{sec:birationalmoduli}

Let $k$ be a field and $S$ be a split smooth quartic del Pezzo surface defined over $k$.
Let $B = \mathbb P^1$.
Let $\alpha \in \mathrm{Nef}_1(S)_{\mathbb Z}$ be a nef class on $S$. We would like to understand the structure of the morphism scheme
\[
\mathrm{Mor}(B, S, \alpha).
\]
First of all, we know that this is geometrically irreducible and has expected dimension $-K_S.\alpha + 2$ by \cite{Testa09} and \cite{BLRT21}.
Then a key observation to the geometry of this scheme is the following lemma:
\begin{lemm}[{\cite[Proposition 3.3 and Notation 3.5]{DLTT25}}]
\label{lemm:birationalmor}
For each nef class $\alpha \in \mathrm{Nef}_1(S)_{\mathbb Z}$, there exists a birational morphism
\[
\rho_\alpha : S \to \mathbb P^1 \times \mathbb P^1,
\]
with the following properties: let $E_1, \cdots, E_4$ be the exceptional divisors contracted by $\rho_\alpha$ and $F, F'$ are general fibers of two conic fibrations coming from $\mathbb P^1 \times \mathbb P^1$. Then we have
\[
2F.\alpha - \sum_{i = 1}^4 E_i.\alpha \geq 0, \quad 2F'.\alpha - \sum_{i = 1}^4 E_i.\alpha \geq 0.
\]
\end{lemm}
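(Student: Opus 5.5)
The plan is to turn both inequalities into a statement about conic classes on $S$, and then use the elementary combinatorics of the ten conic classes on a split quartic del Pezzo surface.

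\emph{Reduction.} Suppose $\rho:S\to\mathbb P^1\times\mathbb P^1$ is birational with exceptional curves $E_1,\dots,E_4$, and $F,F'$ are the pullbacks of the two rulings. Since $S$ is the blow-up of $\mathbb P^1\times\mathbb P^1$ at four points, the canonical bundle formula gives
\[
-K_S=2F+2F'-\sum_{i=1}^4E_i .
\]
Hence $2F-\sum_iE_i=-K_S-2F'$ and $2F'-\sum_iE_i=-K_S-2F$. The two desired inequalities are therefore equivalent to
\[
F.\alpha\le \tfrac12(-K_S.\alpha)\quad\text{and}\quad F'.\alpha\le\tfrac12(-K_S.\alpha).
\]
So it suffices to find two conic classes $F,F'$ with $F.F'=1$, each of $\alpha$-degree at most half the anticanonical degree of $\alpha$, that together come from a birational morphism to $\mathbb P^1\times\mathbb P^1$.

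\emph{Conic classes.} Since $S$ is split, we can write $S$ as the blow-up of $\mathbb P^2$ at five points, with classes $L,E_1,\dots,E_5$. The conic classes (classes $F$ with $F^2=0$ and $-K_S.F=2$) are the ten classes $L-E_i$ and $2L-\sum_jE_j+E_i$. They come in five complementary pairs $\{F,-K_S-F\}$, namely $L-E_i$ and $2L-\sum_jE_j+E_i$. A direct computation shows that any two conic classes from different pairs have intersection number $1$:
\[
(L-E_i).(L-E_j)=1,\quad (L-E_i).(2L-\textstyle\sum E+E_j)=1,\quad (2L-\textstyle\sum E+E_i).(2L-\textstyle\sum E+E_j)=1 .
\]

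\emph{Choice.} In each pair we have $F.\alpha+(-K_S-F).\alpha=-K_S.\alpha$. Since $\alpha$ is nef, both terms are non-negative, so the member with smaller degree satisfies $F.\alpha\le\frac12(-K_S.\alpha)$. Pick two of the five pairs, and let $F$ and $F'$ be the smaller-degree members of these two pairs. Then $F.F'=1$ and both halving inequalities hold.

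\emph{Constructing $\rho_\alpha$.} Each conic class is basepoint free and defines a conic fibration $S\to\mathbb P^1$. Set $\rho_\alpha=(\phi_{|F|},\phi_{|F'|}):S\to\mathbb P^1\times\mathbb P^1$. It has degree $F.F'=1$, so it is a birational morphism between smooth surfaces. It therefore factors as a sequence of blow-ups at points, and the number of blow-ups is $K_{\mathbb P^1\times\mathbb P^1}^2-K_S^2=8-4=4$. Since $-K_S$ is ample, there are no $(-2)$-curves, so the blown-up points are not infinitely near. The exceptional locus thus consists of four disjoint $(-1)$-curves $E_1,\dots,E_4$. The pullbacks of the rulings are $F$ and $F'$, and the displayed formula for $-K_S$ holds. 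Explicitly, for $F=L-E_1$ and $F'=L-E_2$ one can check that the contracted curves are $L-E_1-E_2$, $E_3$, $E_4$, $E_5$.

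I expect the main care to be needed in this last step, namely confirming that the pair $(F,F')$ really arises from a contraction to $\mathbb P^1\times\mathbb P^1$ with the stated canonical-class relation. The inequality itself is just the pairing trick above.
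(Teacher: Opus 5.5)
Your proof is correct and complete. The survey gives no argument of its own for this lemma; it only cites \cite[Proposition 3.3 and Notation 3.5]{DLTT25}, so there is no proof in the text to compare your route against. Your argument is a natural self-contained proof. Using $-K_S=2F+2F'-\sum_i E_i$, you rewrite the two inequalities as $F.\alpha\le\frac12(-K_S.\alpha)$ and $F'.\alpha\le\frac12(-K_S.\alpha)$. You then take the smaller member of each of two of the five complementary conic pairs. All the remaining steps check out:
the intersection numbers among the ten conic classes, the degree-one argument for $(\phi_{|F|},\phi_{|F'|})$, the count $8-4=4$ of blow-ups, and the exclusion of infinitely near points via the absence of $(-2)$-curves.

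Two small remarks.
\begin{enumerate}
\item Nefness of $\alpha$ is not used in the halving step. The smaller of two real numbers with sum $-K_S.\alpha$ is at most half of it, whatever their signs.
\item Since the lemma is stated over $k$, say explicitly that splitness makes every $(-1)$-curve defined over $k$. These curves generate $\mathrm{Pic}(S_{\overline{k}})$, so every conic class is the class of a $k$-line bundle. Hence $|F|$ and $|F'|$ are pencils over $k$ and $\rho_\alpha$ is a $k$-morphism.
\end{enumerate}

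If you later want the strict inequalities that make $\ell$ positive on a dense open cone, your choice already gives them whenever $\alpha$ avoids the hyperplanes $F.\alpha=(-K_S-F).\alpha$ of the two chosen pairs.
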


From now on, we fix such a birational morphism $\rho_\alpha$ for each $\alpha$, and we set
\[
a(\alpha) = F.\alpha,\, a'(\alpha) = F'.\alpha,\,  k_i(\alpha) = E_i.\alpha, \, \text{ and } h(\alpha) = -K_S.\alpha.
\]
Note that we have
\[
h(\alpha) = 2a(\alpha) + 2a'(\alpha) - \sum_{i = 1}^4k_i(\alpha).
\]

Next we consider the following Zariski open subset of the product of Hilbert schemes:
\[
U_{\bold k(\alpha)} := \{ (T_i) \in \prod_{i = 1}^4 \mathrm{Hilb}^{[k_i(\alpha)]}(B) \, | \, \text{the supports of the $T_i$'s are mutually disjoint} \},
\]
where $\bold k(\alpha) = (k_1(\alpha), k_2(\alpha), k_3(\alpha), k_4(\alpha))$.
Then we define the following structure morphism which is a key geometric structure for our counting arguments:
\[
\Phi_{\alpha} : \mathrm{Mor}(B, S, \alpha) \to U_{\bold k(\alpha)}, [s: B \to S] \mapsto (s^*E_i)_{i = 1}^4.
\]
Regarding this morphism, we have
\begin{prop}[{\cite[Lemma 4.4 and Lemma 4.9]{DLTT25}}]
\label{prop:structuretheorem}
The morphism $\Phi_{\alpha}$ is dominant and the image $\Phi_{\alpha}(\mathrm{Mor}(B, S, \alpha))$ is a Zariski open subset in $U_{\bold k(\alpha)}$. Moreover,
\[
\Phi_{\alpha} : \mathrm{Mor}(B, S, \alpha) \to \Phi_{\alpha}(\mathrm{Mor}(B, S, \alpha))
\]
is smooth and it is a Zariski open subset of a $\mathbb P^{2a - \sum_i k_i} \times \mathbb P^{2a' - \sum_i k_i}$-bundle over $$\Phi_{\alpha}(\mathrm{Mor}(B, S, \alpha)),$$ in the Zariski topology.
\end{prop}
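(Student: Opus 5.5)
Since $\rho_\alpha$ is a birational morphism, every curve $s\colon B\to S$ of class $\alpha$ is the same as a curve $\rho_\alpha\circ s\colon B\to \mathbb P^1\times\mathbb P^1$ satisfying incidence conditions at the four blown-up points $p_1,\dots,p_4$. Concretely, $\rho_\alpha\circ s$ is a pair $(f,g)$ of maps $B\to\mathbb P^1$ of degrees $a=F.\alpha$ and $a'=F'.\alpha$. Write $p_i=(x_i,y_i)$; in the split case the $x_i$ are distinct and the $y_i$ are distinct, since no two $E_i$ lie in a common fiber of a conic fibration. The plan is to identify $\mathrm{Mor}(B,S,\alpha)$ with an open subset of the space of such pairs and to analyze the map $(f,g)\mapsto (s^*E_i)_i$ fiberwise.

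\textbf{Step 1 (fibers).} Fix $(T_i)\in U_{\bold k(\alpha)}$. Set $T=\sum_i T_i$, a divisor of degree $\sum_i k_i$. Write $f=[f_0:f_1]$ with $f_0,f_1\in \rH^0(B,\mathcal O(a))$ having no common zero. The condition $s^*E_i\geq T_i$ forces $f^*(x_i)\geq T_i$. Since the $x_i$ are distinct, this says that the section $f_1-x_if_0$ vanishes on $T_i$ for each $i$. These are $\sum_i k_i$ linear conditions on the $2(a+1)$-dimensional space of pairs $(f_0,f_1)$. Because the $T_i$ have disjoint supports, the conditions are independent as long as $a\geq \sum_i k_i - 1$ or similar. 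This holds here: Lemma~\ref{lemm:birationalmor} gives $2a\geq\sum_i k_i$.

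Independence can be checked by interpolation. The $2\times 2$ Vandermonde structure in $(f_0,f_1)$, together with $h^1(\mathcal O(a)(-T'))=0$ for the relevant subdivisors $T'$, should give it. The solution space then has dimension $2a+2-\sum_i k_i$, so its projectivization is $\mathbb P^{2a-\sum_i k_i+1}$. The stated exponent is $2a-\sum_i k_i$, so I must recheck the count: the fiber should be $\mathbb P^{2a-\sum k_i}$, which would mean one more normalization, namely quotienting by scalars once, is already counted. I will recompute this carefully and match it against $\dim = h+2$. The same argument applies to $g$ with $a'$.

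\textbf{Step 2 (openness).} The open conditions are the following. First, $f_0,f_1$ have no common zero. Second, the pullbacks meet the $p_i$ exactly along $T_i$, so that $s^*E_i=T_i$ and not larger. Third, the strict transform has class exactly $\alpha$. Each of these cuts out a Zariski open subset of the projective-bundle fiber. The vector spaces above vary as kernels of a map of vector bundles of constant rank over $U_{\bold k(\alpha)}$. Hence they form a vector bundle, and its projectivization is Zariski-locally trivial. This gives the Zariski-open subset of a $\mathbb P\times\mathbb P$-bundle, and smoothness of $\Phi_\alpha$ onto its image follows.

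\textbf{Step 3 (dominance and open image).} The image of a flat (indeed smooth) morphism is open. It remains to see that it is nonempty, that is, that a generic fiber contains a genuine morphism of class $\alpha$. For this I would use geometric irreducibility and the dimension count $h+2$ from \cite{Testa09,BLRT21}. Alternatively, one can exhibit, for general $T_i$, a pair $(f,g)$ with no base points; the inequalities of Lemma~\ref{lemm:birationalmor} provide enough freedom to do so. The main obstacle is the independence of the linear conditions in Step 1 uniformly over all of $U_{\bold k(\alpha)}$, including nonreduced $T_i$, together with getting the exact projective dimension right. Both hinge on using the inequalities $2a-\sum_i k_i\geq 0$ and $2a'-\sum_i k_i\geq 0$.
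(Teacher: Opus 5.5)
Your overall route is the one the paper relies on in Section 5, where $\widetilde M_\alpha$ is described as an open subset of the bundle $E\to U_{\mathbf k}$. You push $s$ forward along $\rho_\alpha$, split the incidence condition at $(p_i,p_i')$ into separate linear conditions $f^*(x_i)\geq T_i$ and $g^*(y_i)\geq T_i$, and take fibers to be open in projectivized solution spaces. However, Steps 1--2 have a genuine gap. The $\sum_i k_i$ conditions on $f$ are \emph{not} independent uniformly over $U_{\mathbf k(\alpha)}$. Moreover, $2a\geq\sum_i k_i$ does not give the bound $a\geq\sum_i k_i-1$ that you appeal to.

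Here is a concrete failure. Take $a=a'=4$ and $k_i=2$; this class is nef and satisfies the inequalities of Lemma~\ref{lemm:birationalmor}. Let $u$ define a degree-$2$ map $\phi\colon B\to\mathbb P(V_1)$, and put $T_i=\phi^*(x_i)$; these have disjoint supports. Every $g\cdot u$ with $g\in\rH^0(B,\mathcal O(2))$ satisfies all eight conditions. So the solution space in the $10$-dimensional space $\rH^0(B,V_1\otimes\mathcal O(4))$ has dimension $3$, not $10-8=2$. Hence the solution spaces are not kernels of a constant-rank map over all of $U_{\mathbf k(\alpha)}$, and the vector bundle of Step 2 does not exist there. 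Note that every such $g\cdot u$ has base points, so this $(T_i)$ is not in the image. That is why the statement only asserts a bundle over the image.

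The missing step is to restrict to the right open set and prove that the image lies in it. Let $\mathcal K_T\subset V_1\otimes\mathcal O(a)$ be the elementary modification consisting of sections whose image in $(V_1/\ell_{i,1})\otimes\mathcal O(a)$ vanishes on $T_i$ for all $i$. It is locally free of degree $2a-\sum_i k_i$. Independence of the conditions is exactly $\rH^1(B,\mathcal K_T)=0$. By semicontinuity this holds on an open $U^\circ\subset U_{\mathbf k(\alpha)}$. Over $U^\circ$, cohomology and base change makes the solution spaces a vector bundle of rank $2a+2-\sum_i k_i$; the same works for $g$.

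The image of $\Phi_\alpha$ lies in $U^\circ$. Suppose $\mathcal K_T\cong\mathcal O(b_1)\oplus\mathcal O(b_2)$ with $b_1\leq-2$. Then $b_2=2a-\sum_i k_i-b_1\geq 2$, and every global section lies in $\mathcal O(b_2)$. Such a section vanishes somewhere, and so does its image in $V_1\otimes\mathcal O(a)$. Hence no base-point-free $f$ exists over $T$. This is precisely where the inequality of Lemma~\ref{lemm:birationalmor} enters. With this in hand, your Steps 2 and 3 go through over $U^\circ$ intersected with its analogue for $g$.

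Finally, trust your count. The fibers are open in $\mathbb P^{2a+1-\sum_i k_i}\times\mathbb P^{2a'+1-\sum_i k_i}$. Then the total dimension is $\sum_i k_i+(2a+1-\sum_i k_i)+(2a'+1-\sum_i k_i)=h(\alpha)+2$, as it must be. This also matches the $\mathbb G_m^2$-torsor $\widetilde M_\alpha\subset E$ and the quantity $2a+1-\sum_i k_i$ in the definition of $I$. The exponents as displayed would give total dimension $h(\alpha)$, so they appear to be off by one. There is no extra normalization to account for.
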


As a corollary, we have

\begin{coro}[{\cite[Theorem 4.2]{Tan25}}]
The scheme $\mathrm{Mor}(B, S, \alpha)$ is rational over $k$.
\end{coro}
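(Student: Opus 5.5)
The plan is to read off rationality directly from the structure theorem, Proposition~\ref{prop:structuretheorem}. Since $\mathrm{Mor}(B, S, \alpha)$ is geometrically irreducible (by \cite{Testa09}, \cite{BLRT21}), it suffices to exhibit a dense open subset of it that is $k$-isomorphic to a dense open subset of an affine space. By Proposition~\ref{prop:structuretheorem}, $\mathrm{Mor}(B, S, \alpha)$ is a Zariski open subset of a $\mathbb P^{2a - \sum_i k_i} \times \mathbb P^{2a' - \sum_i k_i}$-bundle over the image $V := \Phi_\alpha(\mathrm{Mor}(B, S, \alpha))$. This bundle is Zariski locally trivial over $k$. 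Hence, after shrinking $V$ to a dense open subset $V^\circ$ over which the bundle is trivial, a dense open subset of $\mathrm{Mor}(B,S,\alpha)$ is $k$-isomorphic to an open subset of $V^\circ \times \mathbb P^{2a - \sum k_i} \times \mathbb P^{2a'-\sum k_i}$. This open subset is nonempty, hence dense, since the ambient variety is irreducible. We are therefore reduced to proving that $V$ is rational over $k$.

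Next, $V$ is a dense Zariski open subset of $U_{\mathbf k(\alpha)}$ (dominance in Proposition~\ref{prop:structuretheorem}). In turn, $U_{\mathbf k(\alpha)}$ is a dense open subset of $\prod_{i=1}^4 \mathrm{Hilb}^{[k_i]}(\mathbb P^1)$, provided it is nonempty; it is, because $k$-points or divisors with disjoint support exist, and it is in any case nonempty as it contains the image. Now $\mathrm{Hilb}^{[k_i]}(\mathbb P^1) = \mathrm{Sym}^{k_i}\mathbb P^1 \cong \mathbb P^{k_i}$ over $k$, via binary forms of degree $k_i$ up to scalar. So $U_{\mathbf k(\alpha)}$ is an open subset of $\prod_i \mathbb P^{k_i}$, which is $k$-rational. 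Combining the two steps, $\mathrm{Mor}(B,S,\alpha)$ is birational over $k$ to $\prod_i \mathbb P^{k_i}\times \mathbb P^{2a-\sum k_i}\times\mathbb P^{2a'-\sum k_i}$. As a sanity check, the dimension is $\sum k_i + 2a+2a' - 2\sum k_i + 2 = h(\alpha)+2$, matching the expected dimension.

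The only point needing care is that the projective bundle really is Zariski locally trivial over $k$, rather than merely étale locally trivial, i.e.\ a Brauer–Severi scheme. The statement of Proposition~\ref{prop:structuretheorem} explicitly says ``in the Zariski topology'', so I take this as given. If one instead had to verify it, I would realize the fibers concretely: the maps $B\to\mathbb P^1\times\mathbb P^1$ of bidegree $(a,a')$ passing through the prescribed points with the prescribed multiplicities correspond to pairs of sections of $\mathcal O(a)^{\oplus 2}$ and $\mathcal O(a')^{\oplus 2}$ satisfying linear conditions. These form projectivizations of vector bundles, hence are Zariski locally trivial. I expect this to be the main potential obstacle; the remaining steps are formal.
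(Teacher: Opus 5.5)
Your proposal is correct and is the paper's own proof, just spelled out: $U_{\mathbf k(\alpha)}$ is open in $\prod_i \mathrm{Hilb}^{[k_i]}(B)\cong\prod_i\mathbb P^{k_i}$, hence rational, and the Zariski-locally-trivial product-of-projective-spaces bundle structure of Proposition~\ref{prop:structuretheorem} (Zariski, not merely \'etale, as you rightly stress) transfers rationality to $\mathrm{Mor}(B,S,\alpha)$. Only your dimension sanity check is off: with fibers $\mathbb P^{2a-\sum_i k_i}\times\mathbb P^{2a'-\sum_i k_i}$ the total is $h(\alpha)$, not $h(\alpha)+2$, and the $+2$ holds only because the fibers are really $\mathbb P^{2a+1-\sum_i k_i}\times\mathbb P^{2a'+1-\sum_i k_i}$ ($\sum_i k_i$ linear conditions on the $(2a+2)$-dimensional space $\rH^0(B,V_1\otimes\mathcal O(a))$, and similarly for $a'$), which is irrelevant to rationality.
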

\begin{proof}
Note that $\mathrm{Hilb}^{[k_i(\alpha)]}(B)\cong \mathbb P^{k_i(\alpha)}$, so $U_{\bold k(\alpha)}$ is rational.
Thus our assertion follows from Proposition~\ref{prop:structuretheorem}.
\end{proof}

From now on we assume that $k = \mathbb F_q$.
Since $U_{\bold k(\alpha)}$ is a Zariski open subset of the product of projective spaces and fibers are also Zariski open subsets of products of projective spaces, we can conclude that $\#\mathrm{Mor}(B, S, \alpha)(\mathbb F_q)$ is bounded by the number of $k$-points on the product of projective spaces. This leads to the following corollary:

\begin{coro}[{\cite[Theorem 8.3]{DLTT25}}]
We have
\[
\lim_{d \to \infty} \frac{\mathsf N(\mathbb P^1, S, -K_S, d)}{q^dd^5} \leq \frac{\alpha(\mathrm{Nef}_1(S))q^2}{(1-q^{-1})^7}.
\]
\end{coro}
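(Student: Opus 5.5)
Using Proposition~\ref{prop:structuretheorem}, I will bound each summand $\#\mathrm{Mor}(B,S,\alpha)(\mathbb F_q)$ by a product of point counts of projective spaces, and then sum over nef classes by a lattice-point count in a polytope.

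\textbf{Step 1: pointwise bound.} Fix $\alpha\in\mathrm{Nef}_1(S)_{\mathbb Z}$ and write $a=a(\alpha)$, $a'=a'(\alpha)$, $k_i=k_i(\alpha)$, $h=h(\alpha)$. By Proposition~\ref{prop:structuretheorem}, $\Phi_\alpha$ realizes $\mathrm{Mor}(B,S,\alpha)$ as a Zariski open subset of a Zariski-locally trivial $\mathbb P^{2a-\sum k_i}\times\mathbb P^{2a'-\sum k_i}$-bundle over an open subset of $U_{\bold k(\alpha)}\subset\prod_i\mathbb P^{k_i}$. Counting points fiberwise gives
\[
\#\mathrm{Mor}(B,S,\alpha)(\mathbb F_q)\le \prod_{i=1}^4\#\mathbb P^{k_i}(\mathbb F_q)\cdot\#\mathbb P^{2a-\sum k_i}(\mathbb F_q)\cdot\#\mathbb P^{2a'-\sum k_i}(\mathbb F_q).
\]
Each factor satisfies $\#\mathbb P^m(\mathbb F_q)\le q^m/(1-q^{-1})$. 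The exponents add up to $\sum k_i+2a+2a'-2\sum k_i=h$, so the bound is $q^{h}(1-q^{-1})^{-6}$.

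\textbf{Step 2: summation.} Summing over classes gives
\[
\mathsf N(\mathbb P^1,S,-K_S,d)\le (1-q^{-1})^{-6}\sum_{h\le d} q^h\,\#\{\alpha\in\mathrm{Nef}_1(S)_{\mathbb Z}: -K_S.\alpha=h\}.
\]
Since $\rho(S)=6$, the number of lattice points of height exactly $h$ in the nef cone is asymptotic to $c\,h^{5}$. The constant $c$ is $\alpha(\mathrm{Nef}_1(S))$ times the covolume normalization. Here I recall that the alpha constant is defined so that $\#\{\alpha:-K_S.\alpha=h\}\sim\alpha(\mathrm{Nef}_1(S))\,\rho\,h^{\rho-1}$, or an equivalent normalization. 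The normalization must be checked against the definition used in \cite{DLTT25}. The geometric sum then gives
\[
\sum_{h\le d}q^h h^5\sim q^d d^5/(1-q^{-1}).
\]
This produces $(1-q^{-1})^{-7}$ times the alpha-constant factor.

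\textbf{Main obstacle: constants.} The hardest point is matching constants exactly, namely the $q^2$ and the normalization of $\alpha$. Step 1 as written gives $q^h$, not $q^{h+2}$. The discrepancy comes from the fact that $\mathrm{Mor}$ has dimension $h+2$, so I recount. The base $U_{\bold k}$ has dimension $\sum k_i$, and the fibres have dimension $2a+2a'-2\sum k_i+2$. The latter suggests that the $\mathbb P$-bundle exponents in Proposition~\ref{prop:structuretheorem} are really $2a-\sum k_i+1$ and $2a'-\sum k_i+1$. These are the spaces of sections of $\mathcal O(2a-\sum k_i+1)$-type data, reflecting pencils of degree $a$ forms vanishing along the $T_i$. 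With these exponents, Step 1 yields $q^{h+2}(1-q^{-1})^{-6}$, hence the factor $q^2$.

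I would therefore re-derive the fibre dimensions directly from the dimension count $\dim\mathrm{Mor}=h+2$, using the irreducibility and expected dimension statements of \cite{Testa09,BLRT21}, rather than trusting the indexing. I would then verify the lattice-point asymptotic with the paper's normalization of $\alpha(\mathrm{Nef}_1(S))$; the limit is to be read as a $\limsup$.
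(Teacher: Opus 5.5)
Your argument is correct and follows the paper's own route: bound $\#\mathrm{Mor}(B,S,\alpha)(\mathbb F_q)$ by the $\mathbb F_q$-point count of $\prod_i \mathbb P^{k_i}$ times the product of projective spaces in the fibres from Proposition~\ref{prop:structuretheorem}, then sum over nef classes. You are also right that the fibre exponents must be $2a+1-\sum_i k_i$ and $2a'+1-\sum_i k_i$, since the fibre of $\widetilde{M}_\alpha$ is open in a vector space of dimension $2a+2a'+4-2\sum_i k_i$; this is exactly what gives $q^{h+2}(1-q^{-1})^{-6}$ per class and hence the $q^2$. To land on the stated constant, commit to the normalization $\#\{\alpha\in\mathrm{Nef}_1(S)_{\mathbb Z} : -K_S.\alpha=h\}\sim\alpha(\mathrm{Nef}_1(S))\,h^{5}$. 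This is Peyre's convention, and it applies because $-K_S$ is primitive (it has degree $1$ on a $(-1)$-curve). The variant with an extra factor $\rho(S)=6$ that you first wrote down would only give six times the claimed bound.
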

This gives an upper bound of the correct magnitude for $\mathsf N(\mathbb P^1, S, -K_S, d)$.

\section{Homological sieve}

Let $k$ be a perfect field.
We work on the setup established in Section~\ref{sec:birationalmoduli}.
We fix a nef class $\alpha$ and we denote $\mathrm{Mor}(B, S, \alpha)$ by $M_\alpha$.
We fix a birational morphism
\[
\rho : S \to \mathbb P^1 \times \mathbb P^1,
\]
satisfying the properties in Lemma~\ref{lemm:birationalmor}.
Let $(p_i, p'_i) \in \mathbb P^1 \times \mathbb P^1$ be the image of $E_i$.
Using the above birational morphism we have the following identification:
\[
M_\alpha = \{ [s : B\to \mathbb P^1 \times \mathbb P^1] \in \mathrm{Mor}(B, \mathbb P^1 \times \mathbb P^1, (a, a'))\, | \, \mathrm{mult}_{(p_i, p_i')}(s) = k_i \}.
\]
Let $\mathbb P^1 \times \mathbb P^1 = \mathbb P(V_1) \times \mathbb P(V_2)$ and $\ell_{i, j} \subset V_j$ be the $1$-dimensional subspace corresponding to $p_i, p'_i$.
We consider the following space of sections:
\[
\widetilde{M}_\alpha =\{(s, t) \in \rH^0(B, V_1\otimes \mathcal O(a)) \oplus \rH^0(B, V_2\otimes \mathcal O(a')) \, | \, \text{ $(s, t)$ satisfies $(*)$ }\},
\]
where $(*)$ means that
\begin{itemize}
\item $s = (s_1, s_2)$ has no common root and $t$ also has no common root; and
\item the length of $(s, t)^*(\ell_{i, 1} \oplus \ell_{i, 2})$ is given by $k_i$.
\end{itemize}
Such an $(s, t)$ uniquely determines $[s : B\to \mathbb P^1 \times \mathbb P^1]  \in M_\alpha$, so we have a morphism
\[
\widetilde{M}_\alpha \to M_\alpha,
\]
which is realized as a $\mathbb G_m^2$-torsor over $M_\alpha$.

Recall that we have a smooth morphism
\[
\widetilde{M}_\alpha \to M_\alpha \to U_{\bold k}.
\]
Let $w = (w_i) \in U_{\bold k}$. Then the fiber $\widetilde{M}_{\alpha, w}$ is realized as a Zariski open subset of the vector space
\[
E_w =\rH^0(B, V_1\otimes\mathcal O(a) \oplus V_2 \otimes O(a'))_w,
\]
where $\rH^0(B, V_1\otimes\mathcal O(a) \oplus V_2 \otimes O(a'))_w$ is the space of sections $(s, t)$ such that $w_i \subset (s, t)^*(\ell_{i, 1} \oplus \ell_{i, 2})$ for any $i$.
These vector spaces $E_w$ are bundled as $E \to U_{\bold k}$ and $\widetilde{M}_\alpha$ is realized as a Zariski open subset $\widetilde{M}_\alpha \subset E$.
Then our goal is to understand the number of $k$-points on 
\[
E \setminus \widetilde{M}_\alpha,
\]
when $k = \mathbb F_q$.
A key to this is the inclusion-exclusion principle and the method of bar complexes as developed in \cite{DT24} and \cite{DLTT25}.

Following \cite[Section 5]{DLTT25}, we view the following set
\[
Q = \{ V = V_1 \oplus V_2, \ell_{i, 1} \oplus \ell_{i, 2} \, (i = 1, \cdots, 4), \ell_{i, j}, \, (i = 1, \cdots, 4, j = 1, 2), 0\},
\]
as a poscheme over $k$ (a $k$-scheme with a poset structure) where the poset structure is given by inclusion of subspaces.
Note that $Q$ admits meets, i.e., for any $p, q \in Q$, $p\wedge q = p \cap q \in Q$.
We also consider the following poscheme:
\[
\mathrm{Hilb}(B) = \{ \emptyset\} \sqcup \{B\} \sqcup \bigsqcup_{n = 1}^\infty \mathrm{Hilb}^{[n]}(B),
\]
where the poset structure is again given by inclusion of subschemes.
Then we define another poscheme
\[
\mathrm{Hilb}(B)^Q = \{ [x : Q \to \mathrm{Hilb}(B)] \in \mathrm{Mor}(Q, \mathrm{Hilb}(B)) \, |\, \text{$x$ satisfies $(*)$} \},
\]
where $(*)$ means
\[
p \leq q \in Q \implies x_p \subset x_q \text{ and } \,  x^{-1}(B) = \{V\}.
\]
Note that one can naturally embed $U_{\bold k}$ into $\mathrm{Hilb}(B)^Q$ by $w = (w_i) \mapsto x$,
where $x$ is defined by
\[
x_q =
\begin{cases}
w_i & \text{ if $q = \ell_{i, 1}\oplus \ell_{i, 2}$}\\
\emptyset & \text{otherwise}. 
\end{cases}
\]
We also define the Zariski open subscheme
\[
Q^{JB} = \{ x \in \mathrm{Hilb}(B)^Q\, |\, x_{p\wedge q} = x_p \cap x_q \text{ for any $p, q \in Q$} \} \subset \mathrm{Hilb}(B)^Q.
\]
Elements in this scheme are called saturated.

Let $\mathcal V = V_1 \otimes \mathcal O(a) \oplus V_2 \otimes \mathcal O(a')$ be a vector bundle over $B$. For any $q \in Q$, let $\mathcal K_q \subset \mathcal V$ be the corresponding subbundle over $B$.
For any $(w < x) \in (U_{\bold k} < \mathrm{Hilb}(B)^Q)$, we consider the subspace
\[
Z_{w < x} = \{ (s, t) \in \rH^0(B, \mathcal V) \, |\, \text{for any $q \in Q$}, x_q \subset (s, t)^{*}(\mathcal K_q)\} \subset E_w.
\]
These subspaces are bundled as a closed subscheme
\[
Z \subset (U_{\bold k} < \mathrm{Hilb}(B)^Q) \times_{U_{\bold k}} E.
\]
This is called the stratification of $E$. Note that we have
\[
E \setminus \widetilde{M}_\alpha = \mathrm{im} (Z \to E).
\]
When $k = \mathbb F_q$, by Grothendieck--Lefschetz trace formula, our goal is to understand the Galois modules
\[
\rH^i_{\text{\'et}, c}(\mathrm{im}(Z \to E)_{\overline{k}}, \mathbb Q_\ell).
\]
To this end, we stratify $(U_{\bold k} < \mathrm{Hilb}(B)^Q)$ into the disjoint union of locally closed subsets $\mathcal N_T$ where $T$ runs over all combinatorial types of elements of $(U_{\bold k} < \mathrm{Hilb}(B)^Q)$.
Then we truncate the poset: we choose a closed subscheme $P \subset (U_{\bold k} < \mathrm{Hilb}(B)^Q)$ which is a finite union of $\mathcal N_T$ and it is downward closed and saturation closed.

The bar complex $B(P, Z)$ is a simplicial scheme, i.e., a contravariant functor, from the category $\Delta$ of non-empty finite sets in $\mathbb N$ with non-decreasing maps to the category of separated schemes, which assigns $[n] = \{ 0, 1, \cdots, n\}$ to the scheme
\[
B(P, Z)([n]) = \{ w < x_0 \leq \cdots \leq x_n, z \in Z_{w < x_n} \, |\, (w < x_i) \in P\}.
\]
The following theorem has been proved over $\mathbb C$ by Das--Tosteson and we established this over an arbitrary perfect field:
\begin{theo}[{\cite[Theorem 5.9]{DT24} and \cite[Theorem 1.7]{DLTT25}}]
Let $I \in \mathbb N$. Suppose that
\begin{itemize}
\item $P \to U_{\bold k}$ is proper;
\item for every pair $(w < x) \in P(\overline{k})$ with $x$ saturated and every $y \in Q^{JB}(\overline{k})$ with $x \prec y$, the subspace $Z_{w < y}$ has expected dimension; and
\item $P$ contains every $\mathcal N_T$ such that the expected cohomological function $\kappa$ satisfies $\kappa(T) \leq I$.
\end{itemize}
Then for all $i > 2\dim E - I - 2$, the map
\[
\rH^i_{\textnormal{\'et}, c}(\mathrm{im}(Z \to E)_{\overline{k}}, \mathbb Q_\ell) \to \rH^i_{\textnormal{\'et}, c}(B(P, Z)_{\overline{k}}, \mathbb Q_\ell)
\]
is an isomorphism of Galois modules.
\end{theo}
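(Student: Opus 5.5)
The plan is to compare the image $\mathrm{im}(Z \to E)$ with the geometric realization of the bar complex through an intermediate, untruncated object. We use the augmentation $B(P,Z) \to \mathrm{im}(Z\to E)$, induced by $(w<x_0\le\cdots\le x_n, z)\mapsto z$, and show it is a cohomological equivalence in the stated range by analyzing fibers. For a point $z \in \mathrm{im}(Z\to E)(\overline{k})$, its fiber in the full bar complex $B((U_{\bold k}<\mathrm{Hilb}(B)^Q), Z)$ is the simplicial nerve of the subposet $P_z=\{(w<x) : z \in Z_{w<x}\}$. This poset has a maximal element: the \emph{saturated} element recording the actual vanishing loci $x_q=(s,t)^*(\mathcal K_q)$. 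It is saturated because pullback of subbundles commutes with intersections, using that $Q$ admits meets. Hence each fiber nerve is contractible. First step: prove that for the full (proper over $U_{\bold k}$ after truncation) bar complex, proper base change plus contractible fibers give $\rH^*_c(\mathrm{im}(Z\to E))\cong \rH^*_c(B(\text{full},Z))$ as Galois modules. This is done by cohomological descent for the proper simplicial augmentation, working stratum by stratum over $\overline{k}$. Perfectness of $k$ ensures that the strata $\mathcal N_T$ and the saturation maps are defined over $k$, so everything is Frobenius-equivariant.

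Second step: control the difference between the full bar complex and the truncation $B(P,Z)$. We filter by the strata $\mathcal N_T$ and use the skeletal (simplicial) spectral sequence
\[
E_1^{n,j} = \rH^j_c(B(\cdot,Z)([n])_{\overline{k}})\Rightarrow \rH^{n+j}_c.
\]
The relative term coming from strata $\mathcal N_T\not\subset P$ is computed fiberwise over $Q^{JB}$: the chains of nondegenerate simplices ending at $x$ contribute compactly supported cohomology of $Z_{w<x}$ (an affine bundle of expected dimension, by the second hypothesis) times the reduced cohomology of a link in the poset, suspended. Since $Z_{w<y}$ has expected dimension, its top compactly supported degree is $2\dim Z_{w<y}$. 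The expected cohomological function $\kappa(T)$ is designed precisely so that the contribution of $\mathcal N_T$ lives in degrees $\le 2\dim E - \kappa(T) - 2$. Because every $T$ with $\kappa(T)\le I$ already lies in $P$, the discarded strata only affect degrees $\le 2\dim E - I - 2$, which gives the isomorphism for $i> 2\dim E - I -2$. Downward and saturation closedness of $P$ make the fibers of $B(P,Z)$ over $z$ still have a terminal saturated element when $z$ is "shallow". This is what lets the comparison be made stratum-wise.

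The main obstacle is this degree estimate. It requires identifying the link of $x$ in the truncated poset, bounding its homological dimension (Cohen–Macaulay/shellability type bounds, e.g. via the meet structure of $Q$ and of subschemes of $B$), and combining it with $\dim Z_{w<y}$ in a way that reproduces $\kappa$. I would try first to reduce to the saturated elements via a Quillen fiber lemma (saturation is a closure operator on the poset, hence a homotopy equivalence of nerves), so only $Q^{JB}$ strata need to be analyzed. The codimension of $Z_{w<y}\subset E_w$ is then explicit from the lengths $x_q$ and the ranks of $\mathcal K_q$, thanks to the expected dimension hypothesis. Properness of $P\to U_{\bold k}$ is used to justify proper base change for $\rH_c$ throughout.
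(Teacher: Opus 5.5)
Your fiberwise analysis near $P$ is correct. Over a geometric point $z\in E_w$, the fiber of the augmentation is the nerve of $P_z=\{x : z\in Z_{w<x}\}$. The vanishing datum $x_z$, with $(x_z)_q=z^*\mathcal K_q$, is saturated. If $(w<x_z)\in P$, downward closedness gives $P_z=(w,x_z]$, which has a top element.

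The gap is in the global strategy. You pass through the untruncated complex and then discard the strata $\mathcal N_T\not\subset P$ by a degree count that assumes every $Z_{w<x}$ has expected dimension. The second hypothesis gives this only for $y\in Q^{JB}$ lying one step ($x\prec y$) above a saturated element of $P$. Further out it is false. For $q=\ell_{i,1}\oplus\ell_{i,2}$ one has $\mathcal V/\mathcal K_q\cong\mathcal O(a)\oplus\mathcal O(a')$. So once $x_q$ has length $>\max(a,a')$, every $z\in Z_{w<x}$ is a section of $\mathcal K_q$. Then $Z_{w<x}$ stops depending on $x_q$, while its expected dimension tends to $-\infty$ and $\dim\mathcal N_T$ grows. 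For such strata the estimate ``the contribution of $\mathcal N_T$ lies in degrees $\le 2\dim E-\kappa(T)-2$'' has no justification, and the a priori bound on their contribution is unbounded. Any cancellation among them is invisible to a stratum-by-stratum count.

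The same strata also break your first step. Every $Z_{w<x}$ contains $0$, so the untruncated bar complex is not of finite type and its terms are not proper over $E$. Proper base change and cohomological descent are therefore not available as stated. Moreover, for $z\in\rH^0(B,\mathcal K_q)$ with $q\neq V$, the would-be maximum has $x_q=B$, which is excluded by $x^{-1}(B)=\{V\}$.

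The hypotheses are built for a direct comparison that never looks more than one step outside $P$. Your remark about ``shallow'' $z$ should be the main argument, not an aside. In outline:
\begin{itemize}
\item The map $\pi\colon B(P,Z)\to\mathrm{im}(Z\to E)$ is proper by the first hypothesis.
\item Over the open set of $z$ with $(w<x_z)\in P$ the fibers are contractible. By proper base change, $\pi$ induces a Frobenius-equivariant isomorphism on $\rH^*_c$ there.
\item For $z$ in the closed complement, take $x$ maximal among the saturated elements of $P$ below the vanishing datum of $z$; saturation closedness is used here. Take a saturated $y$ with $x\prec y$ on the way up to that datum. Then $y\notin P$, so its type has $\kappa>I$ by the third hypothesis, and $z\in Z_{w<y}$, which has expected dimension by the second.
\item So the bad locus is covered by finitely many families $Z_{w<y}$ of controlled dimension. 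The $\kappa$-dimension count is needed only there, to show that $\rH^i_c$ of the bad locus, and of its preimage in $B(P,Z)$, vanish for $i\ge 2\dim E-I-2$. For the preimage, filter by the top element $x$ of a chain; its link $(w,x)$ lies in $P$.
\item The long exact sequences of the open--closed decomposition and the five lemma then give the isomorphism for $i>2\dim E-I-2$.
\end{itemize}
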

The assumptions of this theorem are verified by using the description of the moduli space $M_{\alpha}$ mentioned before with
\[
I = \left\lfloor \frac{1}{8}\min\{2a + 1 - \sum_i k_i, 2a' + 1 - \sum_i k_i\} - \frac{1}{2} \right\rfloor.
\]
Let us assume that $k = \mathbb F_q$.
By the Grothendieck--Lefschetz trace formula, our goal is to understand, up to an error term, 
\[
\sum_{i \geq 4a + 4a' -2\sum_i k_i -I +2} (-1)^i\mathrm{Tr}(\mathrm{Frob}\curvearrowright \rH^i_{\text{\'et}, c}(B(P, Z)_{\overline{\mathbf k}}, \mathbb Q_\ell)).
\]
Then using a spectral sequence associated to a stratification of $B(P, Z)$ and the Grothendieck--Lefschetz trace formula for simplicial schemes, the above quantity is equal to, up to an error term,
\[
-q^{2a + 2a' + 4} \sum_{(w < x) \in (U_{\bold k} < Q^{JB})(k)} \mu_k(w, x)q^{-\gamma(x)},
\]
where $\mu_k$ is the M\"obius function for the poset $Q^{JB}(k)$ and $\gamma(x)$ is the expected codimension of the incident condition imposed by $x$. Putting altogether, the quantity $\#M_{\alpha}(k)$ is equal to
\[
q^{2a + 2a' + 4} \sum_{(w \leq x) \in (U_{\bold k} \leq Q^{JB})(k)} \mu_k(w, x)q^{-\gamma(x)},
\]
up to an error term.
\section{The virtual height zeta function}

\subsection{The virtual height zeta function}
As we explained, we have
\[
\#\widetilde{M}_\alpha(k) \sim q^{2a+2a' + 4} \sum_{(w \leq x) \in (U_{\bold k} \leq Q^{JB})(k)} \mu_k(w, x)q^{-\gamma(x)},
\]
as $a, a', k_i, I \to \infty$.
On the other hand, the expectation is
\begin{align*}
\#\widetilde{M}_\alpha(k) &\sim (q-1)^2\tau_{-K_S}(S)q^{2a+2a' - \sum_i k_i}\\
& = (1-q^{-1})^{-4}\left(\prod_{c \in |B|}(1-q^{-|c|})^6(1+ 6q^{-|c|} + q^{-2|c|}) \right)q^{2a+2a' - \sum_i k_i + 4},
\end{align*}
as $a, a', k_i, I \to \infty$.

To this end, we consider the following virtual height zeta function:
\[
\mathsf Z(\bold t) = \sum_{k_1, \cdots, k_4 = 0}^\infty q^{\sum_{i = 1}^4k_i}\left(\sum_{(w \leq x) \in (U_{\bold k} \leq Q^{JB})(k)} \mu_k(w, x)q^{-\gamma(x)}\right)t_1^{k_1}t_2^{k_2}t_3^{k_3}t_4^{k_4}.
\]
It follows from \cite[Lemma 7.7]{DLTT25} that $\mu_k$ is multiplicative. This implies that the above zeta function becomes an Euler product:
\begin{align*}
&\prod_{c \in |B|}\Big(1-6q^{-2|c|} + 8q^{-3|c|}-3q^{-4|c|}\\ &+ \sum_{i = 1}^4\sum_{d_i =1}^\infty (qt_i)^{|c|d_i}(q^{-2d_i|c|} - 2q^{-(2d_i + 1)|c|} + 2q^{-(2d_i+3)|c|} - q^{-(2d_i+4)|c|}) \Big).
\end{align*}
Using this expression, one can prove that we have
\[
\lim_{t_i \to 1} \prod_{i = 1}^4 (1-t_i)\mathsf Z(\bold t) = (1-q^{-1})^{-4}\prod_{c \in |B|}(1-q^{-|c|})^6(1 + 6q^{-|c|} + q^{-2|c|}).
\]
See \cite[Proposition 8.7]{DLTT25} for more details.
Using this and Abel summation techniques, one can prove that 
\begin{align*}
\#\widetilde{M}_\alpha(k) \sim (1-q^{-1})^{-4}\left(\prod_{c \in |B|}(1-q^{-|c|})^6(1+ 6q^{-|c|} + q^{-2|c|}) \right)q^{2a+2a' - \sum_i k_i + 4},
\end{align*}
as $a, a', k_i, I \to \infty$. See \cite[Section 8.6]{DLTT25} for more details.
Theorem~\ref{theo:main} follows from this statement.

\subsection{The error term estimates}

Finally let us mention the error term for our asymptotic formula. A key to this is the following theorem by Sawin--Shusterman:

\begin{theo}[Sawin--Shusterman, {\cite[Theorem B.1]{DLTT25}}]
Let $k$ be a separably closed field and $X$ be a projective variety defined over $k$.
Let $H$ be an ample divisor on $X$. Let $\mathrm{Mor}(\mathbb P^1, X, e)$ be the morphism scheme parametrizing rational curves of $H$-degree $e$ on $X$. Then there exists a constant $C$ such that for  $\ell$ a prime invertible in $k$, we have
\[
\sum_{i = 0}^\infty \dim \, \rH_{\textnormal{\'et}, c}^i (\mathrm{Mor}(\mathbb P^1, X, e), \mathbb Q_\ell) \leq C^{e + 1}.
\]
The constant $C$ only depends on $\dim \rH^0(X, \mathcal O(H))$ and $H^{\dim \, X}$.
\end{theo}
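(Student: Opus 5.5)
The plan is to reduce to a universal family of spaces of maps to projective space, and then bound the Betti numbers of a fibre uniformly by the degree of the defining equations.

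\textbf{Step 1: reduction to a subscheme of an affine space.} Set $N+1=\dim \rH^0(X,\mathcal O(H))$. Replacing $H$ by a fixed multiple if needed, the complete linear system gives an embedding $X\hookrightarrow \mathbb P^N$, and $X$ is cut out by homogeneous equations $F_1,\dots,F_m$. The degrees of the $F_j$ and their number $m$ are bounded in terms of $N$ and $H^{\dim X}$ alone; for instance, take the Chow form of degree $H^{\dim X}$, or Mumford--Castelnuovo regularity bounds.

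A morphism $\mathbb P^1\to X$ of $H$-degree $e$ is then a tuple $(s_0,\dots,s_N)$ of binary forms of degree $e$ with no common zero, taken modulo $\mathbb G_m$, such that $F_j(s_0,\dots,s_N)\equiv 0$ for all $j$. Expanding each $F_j(s)$ in coefficients gives $\deg F_j\cdot e+1$ polynomial equations. Hence the $\mathbb G_m$-torsor $\widetilde{\mathrm{Mor}}$ is a locally closed subscheme of $\mathbb A^{(N+1)(e+1)}$, namely
\[
\widetilde{\mathrm{Mor}} = \{ s : F_j(s)=0 \ \forall j\} \setminus \{\mathrm{Res}=0\}.
\]
Here the closed conditions consist of $O(e)$ equations of bounded degree, and the open condition excludes the locus where the forms share a root.

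\textbf{Step 2: bounding total Betti numbers.} I would invoke a Katz-type bound for compactly supported $\ell$-adic Betti numbers. If $Y\subset\mathbb A^n$ is defined by $r$ equations of degree $\le d$, then
\[
\sum_i \dim \rH^i_c(Y)\le 3\,(d+2)^{n+r}.
\]
With $n,r=O(e)$ and $d$ bounded, this gives $C^{e+1}$ for the closed piece.

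The open condition needs separate handling. The complement of the resultant locus is not described by a single bounded-degree equation in few variables. I would instead write "no common root" via the excision sequence. The bad locus is the image of $\{(s,p): s(p)=0\}\subset \widetilde{\mathrm{Mor}}\times\mathbb P^1$. Alternatively, stratify by the degree $g$ of the gcd: the stratum with common factor of degree $g$ is isomorphic to (degree-$g$ divisor) $\times$ (maps of degree $e-g$ with no common root). Inductively the bounds then sum to $\sum_g (g+1)\,C^{e-g+1}$, which is still of exponential type after enlarging $C$.

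Both the closed-set bound and the gcd stratification use only the long exact sequence for $\rH_c$, which is subadditive in total dimension. Finally, passing from the torsor to $\mathrm{Mor}$ uses the Leray/Künneth relation for $\mathbb G_m$-torsors, under which total Betti numbers change by a factor of at most $2$. Alternatively, one can realize $\mathrm{Mor}$ directly as a $\mathbb G_m$-quotient and use $\rH_c(\widetilde{\mathrm{Mor}}) \cong \rH_c(\mathrm{Mor})\otimes \rH_c(\mathbb G_m)$ when the torsor is Zariski-locally trivial.

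\textbf{Main obstacle.} The crux is Step 2: obtaining a Betti bound that is \emph{exponential in the number of variables and equations}, with base depending only on the degree, and uniform in the characteristic of $k$. Katz's theorem (Sums of Betti numbers in arbitrary characteristic) gives exactly this for affine varieties cut out by equations. So I would first try to phrase everything, including the open "no common root" condition, as differences of affine varieties defined by $O(e)$ bounded-degree equations in $O(e)$ variables, and apply Katz termwise.

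The dependence of $C$ only on $\dim\rH^0(X,\mathcal O(H))$ and $H^{\dim X}$ then reduces to bounding the degrees and the number of defining equations of $X$ in $\mathbb P^N$ by these invariants. This holds because $X$ is set-theoretically cut out by forms of degree $\le \deg X = H^{\dim X}$. A set-theoretic cut-out suffices here, since étale cohomology only sees the reduced structure.
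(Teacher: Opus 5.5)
The survey gives no proof of this statement; it only quotes \cite[Theorem B.1]{DLTT25}. So your outline can only be judged on its own terms. Write $\sigma_c=\sum_i\dim\rH^i_c$. Most of the outline works. With $H$ very ample, you get $O(e)$ equations of bounded degree in $O(e)$ variables, and your gcd recursion $b_e\le C_1^{e+1}+1+\sum_{g=1}^{e}(g+1)b_{e-g}$ does close after enlarging $C$.

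\textbf{The gap: the torsor step.} Passing from the $\mathbb G_m$-torsor $P=\widetilde{\mathrm{Mor}}$ to $M=\mathrm{Mor}(\mathbb P^1,X,e)$ fails as written. Let $c$ be cup product with the Chern class of the associated line bundle. The Gysin sequence $\cdots\to\rH^{i-2}_c(M)(-1)\xrightarrow{c}\rH^{i}_c(M)\to\rH^{i+1}_c(P)\to\rH^{i-1}_c(M)(-1)\xrightarrow{c}\cdots$ gives $\sigma_c(P)=\dim\ker c+\dim\operatorname{coker}c=2\dim\ker c\le 2\sigma_c(M)$. That is the wrong direction: you need an upper bound for $\sigma_c(M)$, which can be much larger than $\sigma_c(P)$. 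The Zariski-locally trivial torsor $\mathbb A^{n+1}\setminus 0\to\mathbb P^n$ has $\sigma_c=2$ upstairs and $n+1$ downstairs. So your proposed K\"unneth isomorphism is false too.

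The repair costs only a polynomial factor. Since $c$ raises degree by $2$ and $\rH^j_c(M)=0$ for $j>2\dim M$, we have $c^{\dim M+1}=0$. Hence $\sigma_c(M)\le(\dim M+1)\dim\ker c=\frac{\dim M+1}{2}\,\sigma_c(P)$, with $\dim M\le (N+1)(e+1)$.

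Alternatively, work in $\mathbb P^{(N+1)(e+1)-1}$ throughout. The closed locus then splits into $(N+1)(e+1)$ affine slices according to the first nonzero coordinate. The gcd-$g$ stratum becomes, up to universal homeomorphism, an honest product $\mathbb P^g\times\mathrm{Mor}(\mathbb P^1,X,e-g)$. In your affine setting it is only a twisted product $(\mathbb A^{g+1}\setminus 0)\times^{\mathbb G_m}\widetilde{\mathrm{Mor}}_{e-g}$ over $\mathbb P^g$. That still yields the factor $g+1$ by Leray with trivial monodromy, but it is not the product you wrote.

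\textbf{Two smaller cautions.} First, the Betti bound you invoke must be exponential in $n+r$, with base depending only on $d$. Bombieri's device of adding variables supplies this: $\sigma_c(Y)=\sigma_c(\mathbb A^{n+r},\mathcal L_\psi(\sum_j t_jf_j))$. Combine it with a bound for exponential sums of a degree-$(d+1)$ polynomial that is exponential in the number of variables, and spread out in characteristic $0$. A bound whose base grows with the number of equations, of the shape $2^r(rd+3)^{n+1}$, gives only $e^{O(e)}$ here, because $r\asymp e$.

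Second, the dependence of $C$ only on $\dim\rH^0(X,\mathcal O(H))$ and $H^{\dim X}$ requires $H$ very ample, which is surely the intended hypothesis. Replacing $H$ by a multiple is not allowed. The multiple needed depends on $X$, and it changes $e$, $h^0$ and the degree.
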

When $X$ is a smooth quartic del Pezzo surface, $C$ can be put as $2^{32}$. To beat this error term, we need to assume that $q^\epsilon > C$ in Theorem~\ref{theo:main}.

\section{Homological stability}

Finally we explain the applications of homological sieve method to Cohen--Jones--Segal conjecture, which was originally developed in \cite{DT24} and further extended in \cite{DLTT25} and \cite{TT25}. This was the original motivation of \cite{DT24}. Cohen--Jones--Segal conjecture, which was originally developed around 1990s in, e.g., \cite{Segal}, \cite{Guest}, and \cite{CJS00}, is the following conjecture: let $X$ be a smooth Fano variety defined over $\mathbb C$. We fix base points on $X$ and $\mathbb P^1$, and we denote the morphism scheme of pointed morphisms $s : \mathbb P^1 \to X$ of class $\alpha$ by
\[
\mathrm{Mor}_*(\mathbb P^1, X, \alpha).
\]
Similarly we also denote the space of pointed continuous maps of class $\alpha$ in the Euclidean topology by
\[
\mathrm{Top}_*(\mathbb P^1(\mathbb C), X(\mathbb C), \alpha),
\]
which is equipped with compact topology. This is generally an infinite dimensional space like a Banach space. We have the inclusion
\[
\mathrm{Mor}_*(\mathbb P^1, X, \alpha)(\mathbb C) \hookrightarrow \mathrm{Top}_*(\mathbb P^1(\mathbb C), X(\mathbb C), \alpha),
\]
and this realizes $\mathrm{Mor}_*(\mathbb P^1, X, \alpha)(\mathbb C)$ as a subspace of $\mathrm{Top}_*(\mathbb P^1(\mathbb C), X(\mathbb C), \alpha)$. Cohen--Jones--Segal conjecture predicts that the homotopy type of $\mathrm{Mor}_*(\mathbb P^1, X, \alpha)(\mathbb C)$ approximates the homotopy type of $\mathrm{Top}_*(\mathbb P^1(\mathbb C), X(\mathbb C), \alpha)$ as the degree of $\alpha$ tends to $\infty$. A precise statement is given in terms of stabilization of homology and homotopy groups. Here is our result:
\begin{theo}[{\cite[Theorem 1.6]{DLTT25}}]
Let $S$ be a smooth del Pezzo surface of degree $4$ over $\mathbb C$. There exist a dense open subset cone $U \subset \Nef_1(S)$, a non-negative, rational homogeneous, continuous piecewise linear function $\ell : \Nef_1(S) \to \mathbb R$ which is positive on $U$, and a constant $c \in \mathbb R_{\geq 0}$ such that for any $\alpha \in U$, the homomorphisms
\[
\rH_{i}^{\mathrm{sing}}(\mathrm{Mor}_*(\mathbb P^1, X, \alpha)(\mathbb C), \mathbb Z) \to \rH_{i}^{\mathrm{sing}}(\mathrm{Top}_*(\mathbb P^1(\mathbb C), X(\mathbb C), \alpha), \mathbb Z),
\]
induced by the inclusion, are isomorphisms assuming $i \leq \ell(\alpha)-c$.
\end{theo}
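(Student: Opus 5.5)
We plan to run the homological sieve of the previous section over $\mathbb C$, with singular cohomology in place of étale cohomology, and to compare it with a topological analogue on the mapping space side. \textbf{Step 1 (reduction to the torsor).} Fix $\alpha$ and the birational morphism $\rho_\alpha$ from Lemma~\ref{lemm:birationalmor}. Replace $\mathrm{Mor}_*$ by the based version of $\widetilde M_\alpha$. This is an open subset of the bundle $E\to U_{\bold k}$ of vector spaces given by sections of $\mathcal V=V_1\otimes\mathcal O(a)\oplus V_2\otimes\mathcal O(a')$ with prescribed incidences $w_i$, made based by fixing the value at the base point. The $\mathbb G_m^2$-torsor then disappears. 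Since $E$ is a vector bundle over $U_{\bold k}$, Alexander duality identifies the cohomology of $E\setminus \mathrm{im}(Z\to E)$ in a range with the Borel--Moore homology, or compactly supported cohomology, of the discriminant $\mathrm{im}(Z\to E)$. By the sieve theorem, in degrees $i>2\dim E-I-2$ this equals that of the bar complex $B(P,Z)$, with
\[
I=\left\lfloor \tfrac18\min\{2a+1-\textstyle\sum_i k_i,\,2a'+1-\sum_i k_i\}-\tfrac12\right\rfloor .
\]
We therefore choose $\ell(\alpha)$ comparable to this $\min$. By Lemma~\ref{lemm:birationalmor} it is non-negative, and it is positive on the open cone $U$ where both inequalities are strict. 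It is piecewise linear because finitely many choices of $\rho_\alpha$ cover the nef cone.

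\textbf{Step 2 (computing the bar complex).} Stratify $B(P,Z)$ by the combinatorial types $\mathcal N_T$. Over each stratum, $Z$ is a vector bundle whose rank is given by the expected dimension; this is the second hypothesis of the sieve theorem. Each stratum is thus a vector bundle over configuration-type spaces of points on $B$ labelled by the poset $Q^{JB}$. The resulting spectral sequence has $E_1$ page built from the cohomology of these labelled configuration spaces, twisted by the order complexes of the intervals $(w<x)$. Its terms depend only on the local data (the poset $Q$), independent of $\alpha$, once $\kappa(T)\le I$. This is the topological shadow of the Euler product in the virtual height zeta function.

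\textbf{Step 3 (comparison with mapping spaces).} Carry out the same construction for continuous maps, following Vassiliev and Segal's scanning argument. The space $\mathrm{Top}_*(\mathbb P^1(\mathbb C),X(\mathbb C),\alpha)$ is a component of $\Omega^2 X$. Model $X(\mathbb C)$ locally as $\mathbb P^1\times\mathbb P^1$ blown up at four points, and use Vassiliev's resolution of the discriminant for maps to $\mathbb C^2\times\mathbb C^2$ with incidence to the subspaces $\ell_{i,1}\oplus\ell_{i,2}$ and $\ell_{i,j}$. This gives a spectral sequence with the same $E_1$ terms in the stable range. Check that the inclusion $\mathrm{Mor}_*\to\mathrm{Top}_*$ induces a map of these spectral sequences that is an isomorphism on $E_1$ in degrees $\le \ell(\alpha)-c$. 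Dualizing back gives the homology isomorphism. Integral coefficients need care: the Thom isomorphisms must use integral orientations of the complex vector bundles, and the comparison must be done integrally rather than with $\mathbb Q_\ell$.

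\textbf{Main obstacle.} I expect Step 3 to be the hardest, specifically constructing the topological analogue of $B(P,Z)$ compatibly and showing that the truncation $P$ does not lose classes in the range. The algebraic side controls codimension via the expected-dimension statement, but the topological side requires an infinite-dimensional, scanning-type argument. I would first try to deduce it by stabilization instead. One route is to show that the maps $\mathrm{Mor}_*(\alpha)\to\mathrm{Mor}_*(\alpha+\beta)$ given by gluing in a fixed curve induce isomorphisms in a range, using the $E_1$ computation. One then invokes Segal's theorem that the colimit is homology equivalent to $\Omega^2_0X$. The constant $c$ absorbs the losses from the floor functions and from the gap between the sieve range and $\ell(\alpha)$.
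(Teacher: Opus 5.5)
\textbf{Verdict: there is a genuine gap, in Step 3.} Your algebraic half (Steps 1--2) matches the paper's sketch, and so does the overall architecture: stratify the complement on both the algebraic and topological sides by the same combinatorial data, then compare the resulting inclusion--exclusion spectral sequences on $E_1$. The gap is the first move of Step 3. You propose a Vassiliev resolution for continuous maps ``to $\mathbb C^2\times\mathbb C^2$ with incidence to the subspaces $\ell_{i,1}\oplus\ell_{i,2}$ and $\ell_{i,j}$''. But a general continuous map $\mathbb P^1(\mathbb C)\to X(\mathbb C)$ has no intersection divisor with the exceptional curves $E_i$. The preimage of $E_i$ can be an arbitrary closed set. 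Even for smooth transverse maps the local intersection signs can be negative; only the algebraic count $k_i=E_i.\alpha$ is fixed. Consequently:
\begin{itemize}
\item there is no map $\mathrm{Top}_*(\mathbb P^1(\mathbb C),X(\mathbb C),\alpha)\to U_{\mathbf k(\alpha)}$;
\item there is no description of this space as the complement of a discriminant in a bundle of section spaces with prescribed incidence lengths;
\item hence there is no topological complement stratified by $(U_{\mathbf k(\alpha)}<\mathrm{Hilb}(\mathbb P^1)^Q)$ to which your $E_1$ comparison could apply.
\end{itemize}
``Model $X(\mathbb C)$ locally as a blow-up'' does not fix this. The incidence conditions defining $\widetilde M_\alpha$ are scheme-theoretic lengths, which are meaningless for continuous sections.

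The missing idea, from \cite{DT24}, is a two-step replacement of the mapping space before any sieve is run. First, pass to the subspace $\mathrm{Top}^+_*(\mathbb P^1(\mathbb C),X(\mathbb C),\alpha)$ of maps meeting the exceptional divisors properly and positively, and prove that its inclusion into $\mathrm{Top}_*$ is a homotopy equivalence. For such maps the preimages of the $E_i$, counted with multiplicity, do define a point of $U_{\mathbf k(\alpha)}$. Second, pass to a semi-topological model $\mathcal M^+_\alpha\subset\mathrm{Top}^+_*$, again homotopy equivalent. It is realized as an open subset of a bundle of products of projectivized Banach spaces over $U_{\mathbf k(\alpha)}$. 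This parallels the Zariski-open embedding of $\mathrm{Mor}_*$ into a bundle of products of projective spaces (Proposition~\ref{prop:structuretheorem}). Only then are both complements stratified by the same poscheme, and the expected-dimension hypothesis yields compatible spectral sequences that agree for $i\le\ell(\alpha)-c$.

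Your fallback does not close the gap. Segal's theorem identifies the stabilized space with $\Omega^2$ only for $\mathbb P^n$. For a quartic del Pezzo surface, the claim that the colimit of the $\mathrm{Mor}_*(\alpha)$ under gluing is homology equivalent to $\Omega^2_0X$ is essentially the Cohen--Jones--Segal statement you are trying to prove. In addition, gluing maps $\mathrm{Mor}_*(\alpha)\to\mathrm{Mor}_*(\alpha+\beta)$ with the required stability are not constructed.

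A minor point: the $E_1$ terms are not independent of $\alpha$, since the strata lie over $U_{\mathbf k(\alpha)}$. The comparison does not need independence, only fiberwise compatibility over common strata.
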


Such a statement was originally obtained for degree $5$ del Pezzo surfaces using the homological sieve in \cite{DT24}. We extended this result to quartic del Pezzo surfaces.

A key to the proof of this theorem is that, as we explained, we realized $\mathrm{Mor}_*(\mathbb P^1, X, \alpha)$ as a Zariski open subset of a bundle of the products of projective spaces. To this end, we consider the space of pointed continuous positive maps
\[
\mathrm{Top}^+_*(\mathbb P^1(\mathbb C), X(\mathbb C), \alpha),
\]
which parametrizes continuous maps which have proper positive intersections with exceptional divisors, and \cite{DT24} proves that the inclusion
\[
\mathrm{Top}^+_*(\mathbb P^1(\mathbb C), X(\mathbb C), \alpha) \hookrightarrow \mathrm{Top}_*(\mathbb P^1(\mathbb C), X(\mathbb C), \alpha)
\]
is a homotopy equivalence. Then \cite{DT24} further develops a semi-topological model $\mathcal M^+_\alpha \subset \mathrm{Top}^+_*(\mathbb P^1(\mathbb C), X(\mathbb C), \alpha)$ which is again a homotopy equivalence. Then we realize $\mathcal M^+_\alpha$ as an open subset of a bundle of the products of projective spaces of Banach spaces. Then the complements of these two open subsets in respective bundles are stratified by the same poscheme $$(U_{\bold k(\alpha)} < \mathrm{Hilb}(\mathbb P^1)^Q).$$
This induces two spectral sequences encoding those inclusion-exclusion information which are compatible. In this way one can prove that homology groups of two spaces are isomorphic to each other. See \cite[Section 9]{DLTT25} for more details.

\bibliographystyle{amsalpha}
\bibliography{RIMS}

\end{document}